\documentclass[9pt,twoside]{amsart}
\usepackage{amsxtra}
\usepackage{color}
\usepackage{amsmath,amsthm,amssymb,bm}
\usepackage{mathrsfs,mathtools}
\usepackage{stmaryrd}
\usepackage{hyperref}
\usepackage{enumerate}
\usepackage{xcolor}
\usepackage{pifont}
\usepackage{adjustbox}
\usepackage{tikz}

\theoremstyle{theorem}
\newtheorem{theorem}{Theorem}[section]
\newtheorem{corollary}[theorem]{Corollary}
\newtheorem{proposition}[theorem]{Proposition}
\newtheorem{lemma}[theorem]{Lemma}
\newtheorem*{theorem*}{Theorem}
\theoremstyle{definition}

\newtheorem{example}[theorem]{Example}

\theoremstyle{remark}
\newtheorem{remark}[theorem]{Remark}

\newcommand{\beq}{\begin{equation}}
\newcommand{\eeq}{\end{equation}}
\renewcommand{\a}{\alpha}
\renewcommand{\b}{\beta}

\newcommand{\cF}{\mathcal{F}}

\newcommand{\cH}{\mathcal{H}}

\newcommand{\cV}{\mathcal{V}}

\newcommand{\U}{{\mathrm U}}

\newcommand{\M}{{\mathrm M}}

\newcommand{\n}{\nabla}

\DeclareMathOperator\tr{tr\;}
\DeclareMathOperator\Tr{Tr}

\DeclareMathOperator\vol{vol}

\DeclareMathOperator{\tor}{Tor\;}

\newcommand{\Ric}{{\rm Ric}}

\newcommand{\Scal}{{\rm Scal}}

\renewcommand{\M}{{\rm M}}

\newcommand{\ric}{{\rm{Ric}}}

\numberwithin{equation}{section}

\title[Lagrangian Foliations on compact K\"ahler manifolds ]{Lagrangian Foliations on compact K\"ahler manifolds}
\dedicatory{Dedicated to the memory of Richard Palais}
\author{Fabio Podest\`a}
\address{Dipartimento di Matematica e Informatica ``U.~Dini'' \\ Universit\`a degli Studi di Firenze\\ Viale Morgagni 67/a\\ 50134 Firenze\\ Italy}
\email{fabio.podesta@unifi.it}
\subjclass[2000]{53C55, 53C12, 53D12}
\keywords{Compact K\"ahler manifolds, Lagrangian submanifolds, Riemannian foliations.}

\begin{document}
\maketitle
\begin{abstract} We prove that a compact K\"ahler manifold carrying a regular Riemannian foliation whose leaves are Lagrangian and minimal is flat. In order to prove this result, we establish a foliated version of an integral formula due to Ros.
\end{abstract}
\section{Introduction}

A regular foliation $\mathcal F$ on a manifold $\M$ is given by the collection of all maximal integral submanifolds of an involutive smooth distribution $\cV$ and when the manifold is equipped with a Riemannian metric, the foliation $\mathcal F$ is called Riemannian if its leaves are locally everywhere equidistant (see e.g. \cite{Molino},\cite{Tondeur},\cite{Rovenski}). 

In this work we focus on the case when the ambient manifold is a compact $2n$-dimensional K\"ahler manifold $(\M,g,J)$ and the leaves of the Riemannian foliation $\mathcal F$ are Lagrangian. Such foliations provide a natural meeting point between symplectic geometry and the geometry of foliations and they have been investigated by several authors, providing a deep understanding of their local (see e.g. \cite{Vaisman},\cite{Weinstein}, \cite{Dazord}) and global structure (see e.g. \cite{Duistermaat},\cite{HK}). 

The simplest examples are given by flat complex tori, endowed with their standard Lagrangian foliations.  The Riemannian and Lagrangian conditions alone, however, are not enough to allow for a classification result and not even to prove that the manifold has to be flat, as it can be easily shown by a simple example (see section 1). The question whether examples other than tori exist was already raised by Vaisman in \cite{Vaisman} and rigidity results have instead been obtained under additional assumptions; in particular, Hamilton and Kotschick (\cite{HK}) proved that a K\"ahler metric admitting a parallel Lagrangian foliation is necessarily flat. Clearly the variety of examples becomes much wider if we allow the foliation to be singular with regular Lagrangian leaves, as it happens in the toric case, when a torus $\operatorname{T}^n$ acts on a compact K\"ahler manifold $\M$ in a Hamiltonian fashion.

The additional condition which restores rigidity turns out to be minimality of the leaves. In this work we investigate this condition of minimality and prove the following result
\begin{theorem} Let $(\M,g,J)$ be a compact K\"ahler manifold and let $\mathcal F$ be a regular Riemannian foliation whose leaves are Lagrangian and minimal. Then $(\M,g)$ is flat. 
\end{theorem}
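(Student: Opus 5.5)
The plan is to set up the orthogonal splitting $T\M=\cV\oplus\cH$ with $\cH=J\cV$, and to prove a foliated version of Ros's integral formula, as the abstract announces. Ros's formula for a compact minimal Lagrangian submanifold relates an integral of the Ricci curvature in Lagrangian directions to the squared norm of the second fundamental form. Here we integrate over $\M$ instead of over a single leaf, and the Riemannian condition on $\cF$ supplies the extra structure needed to make the boundary terms vanish.

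Concretely, the steps are as follows.

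\textbf{Step 1 (tensors).} Let $h$ be the second fundamental form of the leaves and $A$ the O'Neill tensor of $\cH$. The Riemannian condition means the transverse metric is bundle-like, i.e. $g(\n_X Y, Z)+g(\n_X Z, Y)=0$ is not needed directly. Instead, for $X,Y\in\cH$ and $U\in\cV$ one has $g(\n_X Y+\n_Y X, U)=0$. So $\cH$ is totally geodesic in the symmetric sense and its only obstruction is $A_XY=\tfrac12[X,Y]^{\cV}$.

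\textbf{Step 2 (relating the tensors through $J$).} Since $\n J=0$ and $\cH=J\cV$, the second fundamental form $h$ of $\cV$ determines the $\cV$-components of $\n_U (JV)$. This ties $h$ to the skew part of $\cH$. The Lagrangian condition makes $C(U,V,W)=g(h(U,V),JW)$ fully symmetric. Minimality gives $\tr C(\cdot,\cdot,W)=0$, hence the mean curvature form vanishes. The Riemannian condition then forces $g(A_{JU}JV,W)$ to be expressible through $C$, so that $A$ and $h$ both come from the single symmetric, trace-free tensor $C$.

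\textbf{Step 3 (the integral formula).} Consider the vector field
$$Z=\sum_i \n_{e_i}e_i \;\text{ summed appropriately on } \cV \text{ and on } \cH,$$
or the $\cV$-valued field built from $C$ contracted with $J$. Computing $\operatorname{div}Z$ with the Codazzi equation and the Ricci identity gives an identity of the form
$$\int_\M \Big(\sum_i \Ric(e_i,e_i)\ \text{(restricted to }\cV)\ +\ c\,|C|^2 + c'|A|^2\Big)\,d\mathrm{vol}=0.$$
Here we use that $\Ric(U,V)=\Ric(JU,JV)$ and that the Ricci form is the curvature of the canonical bundle. Minimality of the leaves means the mean curvature vector vanishes, so the O'Neill/Ranjan-type formula for the scalar curvature of a Riemannian foliation with minimal leaves should give
$$\Scal=\Scal_{\cV}+\Scal_{\cH}^{\text{transv}}-|A|^2-|h|^2+\operatorname{div}(\cdots).$$
By the Lagrangian symmetry, the Kähler identity should let us rewrite $\Scal$ through the Ricci form restricted to $\cV$, which is the term Ros's formula controls.

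\textbf{Step 4 (vanishing).} I would aim for an identity expressing $\int_\M\Scal$, or more precisely $\int_\M \rho(U,JU)$-type terms, as $-\int(|h|^2+\dots)$. Combined with the identity obtained by swapping the roles of $\cV$ and $\cH=J\cV$, this should force $h=0$ and $A=0$. Then $\cV$ and $\cH$ are both parallel, so by the Hamilton–Kotschick result \cite{HK} (a parallel Lagrangian foliation forces flatness), $(\M,g)$ is flat. Failing that, the de Rham decomposition with $J$ swapping the two flat-ish factors, together with the Gauss equation for totally geodesic Lagrangian leaves and the symmetry of the curvature under $J$, should give $R=0$.

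The main obstacle is Step 3, namely finding the right divergence: a Ros-type formula producing coercive terms $|h|^2$, $|A|^2$ with matching signs rather than cancelling ones. I would first try the vector field $Z=\sum_{i} J h(e_i,e_i)$-variants and $\sum_i [Je_i,e_i]$, tracking which combination kills the curvature term via $\Ric(U,U)=\Ric(JU,JU)$.
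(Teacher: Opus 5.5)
Your proposal has a genuine gap. Step~3, which you yourself flag as the obstacle, never produces an identity, and the route you sketch for it cannot work.

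First, $A$ is not a second unknown. The tensor $\phi(X,Y,Z)=g(A_XY,JZ)$ is skew in $(X,Y)$ by the bundle-like condition and symmetric in $(Y,Z)$ by $\n J=0$, so $A\equiv 0$ and $\cH$ is a totally geodesic foliation; your Step~2 claim that $A$ is ``expressible through $C$'' misses this. With $A=0$ one gets $R(U,V)|_{\cV}=[T_U,T_V]$, so the leaves of $\mathcal F$ are flat. Computing the vertical and mixed contributions separately then gives $\sum_j\Ric(V_j,V_j)=\operatorname{div}N$, because the quadratic terms $\sum_i g(B_UV_i,B_VV_i)$ cancel exactly. Equivalently $\Scal=2\operatorname{div}N$. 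In the O'Neill formula you quote, the transverse scalar curvature is $\sum_{i,j}K(JV_i,JV_j)=\sum_{i,j}K(V_i,V_j)=|h|^2-|N|^2$, and this cancels the $-|h|^2$. So for minimal leaves the integrand you hope to make coercive vanishes pointwise. ``Swapping $\cV$ and $\cH=J\cV$'' only reproduces the same identity. All this first-order information yields is Ricci-flatness, via $c_1(M)=0$ in $H^2(M,\mathbb R)$ (the structure group of $(TM,J)\simeq\cV\otimes_{\mathbb R}\mathbb C$ reduces to $\mathrm{O}(n)$). You never establish Ricci-flatness, but it is needed later.

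The missing idea is a genuinely second-order integral identity. The Ros formula the paper adapts is $\int_{UM}\n T(u,\ldots,u)\,d\nu=0$ on the unit tangent bundle, a consequence of Liouville's theorem for the geodesic flow. It is not a Ricci-versus-second-fundamental-form identity for a minimal Lagrangian submanifold.

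Its foliated version lives on $U\cH$. The leafwise geodesic field satisfies $\operatorname{div}G_{\cH}(p,X)=-g(N,X)$, so minimality is exactly what makes it measure-preserving. Hence $\int_{U\cH}(\bar\n_vQ)(Jv,\ldots,Jv)\,d\nu_{\cH}=0$ for every covariant tensor $Q$ on $\cV$, and this is informative only when $Q$ has odd degree.

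The paper proceeds as follows:
\begin{enumerate}
\item[(i)] It takes $Q$ to be the symmetrization of $S\otimes C$, where $C(U,V,W,Z)=(\bar\n_{JU}S)(V,W,Z)$.
\item[(ii)] It uses Ricci-flatness and the K\"ahler identities to show that $C$ is totally symmetric and trace-free.
\item[(iii)] It integrates over the fibres with the spherical moment formula.
\item[(iv)] It evaluates the skew part of $\bar\n_{J\cdot}C$ via $\bar R(JE,JL)|_{\cV}=-[B_E,B_L]$; the torsion term drops out because $A=0$.
\end{enumerate}
This yields $\int_M\bigl(|C|^2+|\sum_iB_i^2|^2+\sum_{i,j}|[B_i,B_j]|^2\bigr)\,d\mu_M=0$, hence $T=0$.

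None of these ingredients appears in your plan, so it does not reach $h=0$. Your final step is fine once $h=0$ is known: a parallel Lagrangian foliation forces flatness, by Hamilton--Kotschick or directly from flat leaves and vanishing mixed curvature.
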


As already mentioned, a related rigidity result was obtained by Hamilton and Kotschick
\cite{HK}, who proved that a K\"ahler metric admitting a parallel Lagrangian foliation is necessarily flat.  Our result may be viewed as a criterion
forcing parallelism: under compactness, the considerably weaker
assumptions that the Lagrangian foliation is Riemannian and has
minimal leaves are enough to prove that its second fundamental form vanishes.

We now briefly describe the main ingredients of the proof and the outline of the paper.  If $\cH=\cV^\perp=J\cV$ denotes the orthogonal distribution, we consider the fundamental O'Neill tensors $A$ and $T$ associated with the orthogonal splitting
$T\M=\cV\oplus\cH$.  In the first section we gather some basic but fundamental results concerning these tensors, the curvature and the Ricci tensor of $(\M, g)$. A first consequence of the K\"ahler and bundle-like conditions is that $A$ vanishes identically. This implies that $\cH$ is integrable and its leaves are totally geodesic. Therefore the manifold turns out to be bi-Lagrangian (also said para-K\"ahler) (see e.g. \cite{Etayo},\cite{Boyom}, \cite{Cruceanu}). The remaining
extrinsic geometry of the foliation is encoded by the symmetric
cubic tensor
\[
S(U,V,W)=g(T_UV,JW),
\]
where $U,V,W$ are vertical vector fields. Then, if $N$ denotes the mean curvature vector of the leaves of $\mathcal F$, a simple
computation shows that 
\[
\Scal=2\operatorname{div}N.
\]
On the other hand, the Lagrangian splitting
\[
(T\M,J)\simeq \cV\otimes_{\mathbb R}\mathbb C
\]
implies that the structure group reduces from $\U(n)$ to ${\rm{O}}(n)$ and therefore $c_1(\M)=0$ in real cohomology.  Consequently, when the
leaves are minimal, the metric is scalar-flat. Since $\M$ is compact Kähler and $c_1(\M)=0$ in real cohomology, this implies that the Ricci form vanishes, and hence \(g\) is Ricci-flat.

The second section deals with the main new tool that is used in the proof of the main theorem. Indeed, instead of chasing after a Bochner-like integral formula that we were unable to find, we liked to investigate and to adapt an old but powerful integral formula established by Ros (\cite{Ros}) in a paper concerning the geometry of complex submanifolds of the complex projective space.  We consider the unit sphere bundle $U\cH$ of the horizontal distribution together with the horizontal geodesic vector field $G_{\cH}$.  Although
the horizontal leaves are totally geodesic, the natural measure on $U\cH$ is not preserved by $G_{\cH}$ in general. Tangent sphere bundles of foliations and the corresponding leafwise geodesic flows have previously been studied by Rovenski and Walczak \cite{RovenskiWalczak}, who use the analysis of transverse Jacobi tensors to obtain integral formulae involving the conullity and mixed curvature operators. In our setting, their first variation of the transverse Jacobian is closely related to the identity 
\[
\operatorname{div}_{d\nu_{\cH}}G_{\cH}(p,v)
=-g(N_p,v)\qquad (p,v)\in U\cH.
\]
that we independently prove for the sake of completeness. Thus minimality is precisely the condition which makes the horizontal geodesic flow measure preserving.  Similarly to what happens in Ros' formula, we have that, for every
covariant tensor $Q$ on $\cV$,
\[
\int_{U\cH}
(\bar\nabla_vQ)(Jv,\ldots,Jv)\,d\nu_{\cH}=0,
\]
where $\bar\n$ is an adapted linear Hermitian connection leaving $\cV$ parallel.

There is a classical literature on integral formulae for
Riemannian foliations, involving the mean curvature, the second
fundamental form and the mixed scalar curvature; see, for instance,
\cite{Walczak} and the references therein. We are not aware of this version in the existing literature.

In the third section we apply this formula to a suitable tensor constructed from $S$ and its
horizontal covariant derivative, noting that we need an odd-degree tensor if we like the formula to be effective.  More precisely, setting for $(U,V,W,Z\in \Gamma(\cV)$
\[
C(U,V,W,Z)=(\bar\n_{JU}S)(V,W,Z).
\]

Ricci-flatness implies that $C$ is a totally symmetric trace-free
$4$-tensor.  Applying the above integral formula to the
symmetrization of $S\otimes C$, and performing the integration on the
unit spheres, a non-trivial algebraic computation leads to
\[
0=
\int_M\left(
|C|^2+
\left|\sum_i B_i^2\right|^2+
\sum_{i,j}|[B_i,B_j]|^2
\right)d\mu_M,
\]
where the operators $B_i$, $i=1,\ldots,n$, are suitable symmetric operators related to the tensor $T$.  Every term in the integrand is non-negative, and
the identity therefore forces $B=0$, equivalently $T=0$.  The
foliation is consequently parallel, and the flatness of $M$ follows.

The author does not know whether the hypothesis of compactness can be relaxed to completeness, as in this case the involved techniques would be totally different, but it seems unlikely that the same rigidity can be obtained from purely local considerations; the proof presented here makes essential use of compactness.\par\medskip\vspace{1cm}

\paragraph{\bf{Acknowledgments.}} The author thanks Marco Radeschi and Leonardo Biliotti for valuable conversations and he also acknowledges the use of ChatGPT-5.6 Sol for assistance in identifying formula \eqref{sphereint} in the existing literature and for double-checking the calculations in this work. The author is partially supported by GNSAGA (INdAM, Italy).

\par\medskip\vspace{2cm}
\section{Preliminaries}
We consider a compact K\"ahler manifold $(\M,g,J)$ of real dimension $2n$, which is endowed with an involutive smooth distribution $\cV$ whose maximal integral submanifolds are Lagrangian. We will say that the foliation $\mathcal F$ given by the union of all these submanifolds is a Lagrangian foliation. We will also assume that the foliation is Riemannian, namely that the metric is bundle-like, i.e.
\beq\label{bundle}\mathcal L_Vg(X,Y) = 0,\eeq
for every vertical vector fields $V\in\Gamma(\cV)$ and horizontal vector fields $X,Y\in\Gamma(\cH)$, where $\cH := \cV^\perp$.\par
The standard examples of such manifolds and Lagrangian foliations are provided by the standard torus $\operatorname{T}^{2n}=\mathbb R^{2n}/\mathbb Z^{2n}$ endowed with the standard flat K\"ahler metric $\omega=\sum_{i=1}^ndx_i\wedge dy_i$ so that the Lagrangian leaves are given by the slices $\{x_i=c_i,\ i=1,\ldots,n\}$. It is also not difficult to exhibit non-flat examples.
\begin{example} On $\operatorname{T}^2$ we construct the metric $g=dx_1^2+f(x_1)^2dy_1^2$ for some positive periodic function $f\in C^\infty(\mathbb R)$ and we define a complex structure $J_f$ with $J(\frac\partial{\partial x_1})= \frac 1{f(x_1)}\frac\partial{\partial y_1}$. Then the metric $g$ is non-flat whenever $f''\not\equiv 0$ and $\{x_1=c\}$ are trivially Lagrangian lines. This can be generalized on $\operatorname{T}^{2n}= \operatorname{T}^2\times \operatorname{T}^{2n-2}$ by taking the product with the standard $\operatorname{T}^{2n-2}$.
\end{example}

We now recall the definition of the basic tensors $A$ and $T$ (see \cite{Bes},\cite{Molino}). For $V,W\in\Gamma(T\M)$ we have by definition 
$$T_VW= (\nabla_{V^\cV}W^\cV)^\cH + (\nabla_{V^\cV}W^\cH)^\cV,$$
$$A_VW = (\nabla_{V^\cH}W^\cV)^\cH + (\nabla_{V^\cH}W^\cH)^\cV,$$
where $\n$ denotes the Levi Civita connection and the superscripts ${}^\cV$ and ${}^\cH$ denote the vertical and horizontal components along $\cV$ and $\cH$ resp.
The condition \eqref{bundle} can be then equivalently written as
\beq\label{bundle2}A_XY + A_YX = 0, \qquad X,Y\in \Gamma(\cH)\eeq
while the K\"ahler condition $\n J=0$ gives for $V\in \Gamma(\cV)$ and $X,Y\in\Gamma(\cH)$
\beq\label{K}A_XJY = JA_XY, \qquad T_VJX=JT_VX.\eeq
For every $V\in \Gamma(\cV)$ we have that $T_V$ is a skew-symmetric endomorphism of $TM$ mapping vertical vectors to horizonal vectors and viceversa. Therefore it is convenient to define
$$B_V:= -T_V\circ J,\quad V\in\Gamma(\cV)$$ 
which is a symmetric endomorphism preserving vertical and horizontal parts. 
It then follows that the $(0,3)$-tensor $S$ on $\cV$ given by
$$S(U,V,W) = g(T_UV,JW)= g(B_UV,W),\qquad U,V,W\in\Gamma(\cV)$$
is totally symmetric, hence a section of $S^3(\cV^*)$. We now prove the first result
\begin{proposition} The tensor $A$ vanishes identically, hence the orthogonal distribution $\cH$ is integrable with totally geodesic leaves.
\end{proposition}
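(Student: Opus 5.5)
The plan is to show that the $\cV$-valued part of $A$ on horizontal pairs, namely $A_XY=(\nabla_XY)^\cV$ for $X,Y\in\Gamma(\cH)$, is simultaneously skew-symmetric and symmetric in $X,Y$. It is skew by the bundle-like condition \eqref{bundle2}. To get symmetry, combine the Lagrangian hypothesis with the K\"ahler identities \eqref{K}. Once $A_XY=0$ for all horizontal $X,Y$, the other components of $A$ vanish as well. Indeed, for $X\in\cH$ and $V\in\cV$, the tensor $A_X$ is skew-symmetric on $T\M$, so
\[
g(A_XV,Y)=-g(V,A_XY)=0 .
\]
This gives $A\equiv 0$. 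Then $[X,Y]^\cV=A_XY-A_YX=0$, so $\cH$ is integrable, and $(\nabla_XY)^\cV=A_XY=0$, so its leaves are totally geodesic.

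For the symmetry I would exploit that $J$ interchanges $\cV$ and $\cH$, since $\cH=J\cV$ because the leaves are Lagrangian. Take $X=JU$ and $Y=JW$ with $U,W\in\Gamma(\cV)$. By \eqref{K} and $\nabla J=0$,
\[
A_XJW=JA_XW=J(\nabla_XW)^\cH=(\nabla_XJW)^\cV .
\]
So it suffices to prove that $g(A_{JU}JW,Z)$ is symmetric in $U,W$ for every vertical $Z$. Using $\nabla J=0$, this quantity equals $-g(\nabla_{JU}W,JZ)$.

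Here I would invoke involutivity of $\cV$ and the vanishing of the K\"ahler form $\omega$ on $\cV$. Since $\omega$ is closed and $\cV$ is Lagrangian, evaluate
\[
d\omega(JU,W,Z)=0 .
\]
Expand it with the Levi-Civita connection, using that $\nabla\omega=0$. Alternatively, expand $g(\nabla_{JU}W,JZ)$ and then swap $U$ and $W$ via torsion-freeness:
\[
\nabla_{JU}W-\nabla_{JW}U=\nabla_W JU-\nabla_U JW+[JU,W]-[JW,U]+\dots
\]
Here it is easier to work with the cubic form. Skew-symmetry \eqref{bundle2} already gives $A_{JU}JW=-A_{JW}JU$. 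It therefore remains to show that the trilinear form $(U,W,Z)\mapsto g(A_{JU}JW,Z)$ is also symmetric in some pair. I would show it is symmetric in $(W,Z)$, using \eqref{K} and skew-symmetry of $A_X$:
\[
g(A_{JU}JW,Z)=g(JA_{JU}W,Z)=-g(A_{JU}W,JZ)=g(W,A_{JU}JZ).
\]
Hence the form is symmetric in $(W,Z)$ and skew in $(U,W)$. A trilinear form that is skew in one pair and symmetric in an overlapping pair vanishes, by the standard $S_3$ argument: alternating the two transpositions produces a sign change after a cycle of length six. This forces $A_XY=0$.

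I expect the main obstacle to be bookkeeping: getting the correct signs in the step $g(A_XJW,Z)=g(W,A_XJZ)$. This is where skew-adjointness of $A_X$ as an endomorphism of $T\M$ (a standard O'Neill fact) and the $J$-commutation \eqref{K} must combine with the right sign. If that sign comes out wrong, the form would be skew in both pairs, and the argument would then need the Lagrangian involutivity input above instead.
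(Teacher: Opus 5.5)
Your proof is correct and is essentially the paper's: the paper takes $\phi(X,Y,Z)=g(A_XY,JZ)$ on horizontal vectors, which is your form $g(A_{JU}JW,Z)$ after composing the arguments with $J$. It shows $\phi$ is symmetric in the last two slots via $\nabla J=0$ and skew in the first two by the bundle-like condition, then concludes by the same $S_3$ cycling argument. Your sign chain $g(A_{JU}JW,Z)=g(JA_{JU}W,Z)=-g(A_{JU}W,JZ)=g(W,A_{JU}JZ)$ is right, so neither the $d\omega$ detour nor the fallback you worried about is needed. The paper extends to $A\equiv 0$ via \eqref{K} where you use skew-adjointness of $A_X$, an immaterial difference.
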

\begin{proof} Let $X,Y,Z$ be horizontal vector fields. Then
$$\phi(X,Y,Z):= g(A_XY,JZ) = g(\nabla_XY,JZ)= -g(\nabla_XJY,Z) = $$
$$= -Xg(JY,Z) + g(JY,\nabla_XZ) = \phi(X,Z,Y).$$
On the other hand we have $\phi(X,Y,Z)= - \phi(Y,X,Z)$. Hence
$$\phi(X,Y,Z)=-\phi(Y,X,Z)=-\phi(Y,Z,X)= \phi(Z,Y,X)=$$
$$= \phi(Z,X,Y)=-\phi(X,Z,Y) = -\phi(X,Y,Z) = 0.$$
It follows that $A_XY=0$ and therefore $A\equiv 0$ by \eqref{K}. Note that the argument involving $\phi$ is equivalent to the well-known fact that the partial symetrization map $c:\Lambda^2V^*\otimes V^*\to V^*\otimes \Lambda^2V^*$ is injective, where $V$ is any vector space.\end{proof}
\begin{remark} Note that the leaves of $\cH$ are Lagrangian, but the corresponding foliation is not necessarily Riemannian; in this case the same argument would force $\cF$ to be totally geodesic and therefore parallel.\end{remark}
\begin{remark}
Since $\cH$ is integrable and both $\cV$ and $\cH$ are Lagrangian,
$(M,\omega,\cV,\cH)$ is a bilagrangian manifold.  In this setting
one may consider the canonical symplectic connection associated with
the two Lagrangian foliations.  It is worth mentioning that, by a
result of Vaisman (\cite{Vaisman2}), the minimality assumption implies that this
canonical connection is Ricci-flat.  We shall not use this fact in
the sequel.
\end{remark}

Using well-known formulas for the curvature (see e.g. \cite{Bes}, \cite{Tondeur}) we have some implications for the curvature tensor $R(X,Y)=[\n_X,\n_Y]-\n_{[X,Y]}$. Before summerizing these in the following Lemma, we introduce the adapted connection $\bar\n$ that is defined as 
\[
\bar\nabla_E G
:=(\nabla_E G^{\mathcal V})^{\mathcal V}
 +(\nabla_E G^{\mathcal H})^{\mathcal H}.
\]
for every vector fields $E,G\in\Gamma(TM)$. It is clear that $\bar\n$ preserves $\cV$ and $\cH$ and it is complex Riemannian, namely $\bar\n g=0$ and $\bar\n J=0$.

\begin{lemma}\label{curv}We have 
\begin{itemize}
\item[i)]  $\label{f1}g(R(X,Y)Z,V) = 0$ for $X,Y,Z\in \Gamma(\cH),\ V\in\Gamma(\cV)$;
\item[ii)] for every $U,V\in\Gamma(\cV)$ we have $R(U,V)(\cV)\subseteq \cV$ and 
\beq\label{f4}R(U,V)|_{\mathcal V} = [T_U,T_V] = - [B_U,B_V]\eeq
\item[iii)] for $X,Y\in \Gamma(\cH)$ and $U,V\in\Gamma(\cV)$ we have
\begin{equation}\label{mixed-curvature}
g(R(X,U)V,Y)
=
g((\bar\nabla_XT)_UV,Y)
+g(T_{T_UX}V,Y);
\end{equation}
\item[iv)] the leaves of $\mathcal F$ are flat.
\end{itemize}
\end{lemma}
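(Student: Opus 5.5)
We prove the four items in the order i), ii), iv), iii). Each follows from the O'Neill-type decomposition of the curvature for the splitting $T\M=\cV\oplus\cH$, combined with the two facts already established: $A\equiv 0$ (Proposition above) and the K\"ahler identity \eqref{K}. Throughout we write $\n_EG=\bar\n_EG+(\text{second fundamental form terms})$. The vanishing of $A$ means that for horizontal $X$ the only correction term is $0$ on horizontal fields. On vertical fields the $\cV$-to-$\cH$ term is $(\n_XV)^\cH=A_XV=0$. Hence $\n_X=\bar\n_X$ on all of $T\M$ whenever $X$ is horizontal. For vertical $U$ we have instead $\n_U=\bar\n_U+T_U$.

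For i) we take $X,Y,Z$ horizontal. Since $\cH$ is integrable with totally geodesic leaves, $\n_YZ$ is horizontal, and also $[X,Y]$ is horizontal. So $R(X,Y)Z=\n_X\n_YZ-\n_Y\n_XZ-\n_{[X,Y]}Z$ is horizontal, and $g(R(X,Y)Z,V)=0$.

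For ii) we take $U,V,W$ vertical. We compute $\n_U\n_VW=\n_U(\bar\n_VW+T_VW)$, and its vertical part is $\bar\n_U\bar\n_VW+T_UT_VW$. Subtracting the symmetric term and the term with $\n_{[U,V]}W$ (where $[U,V]$ is vertical) gives
\[
(R(U,V)W)^\cV=\bar R(U,V)W+[T_U,T_V]W,
\]
with $\bar R$ the curvature of the induced connection on the leaves, i.e.\ their Levi-Civita connection. This is the Gauss equation. For the horizontal component we pair with $JZ$, $Z$ vertical, and use the K\"ahler condition: $g(R(U,V)W,JZ)=-g(R(U,V)JW,Z)$. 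We then apply the Gauss/Codazzi formula to $R(U,V)JW$, where $JW$ is horizontal. The key additional input is the first Bianchi identity together with $R(U,V)J=JR(U,V)$. Since $JW\in\cH$ and $\cH$ is totally geodesic and integrable, the mixed component $g(R(U,V)JW,Z)$ equals the Codazzi expression $g((\bar\n_UT)_VW-(\bar\n_VT)_UW,JZ)$ up to sign. Our plan is to show that the totally symmetric tensor $\bar\n S$ (in its vertical slots) makes this Codazzi expression vanish, since $S$ is symmetric and $\bar\n J=0$. This gives $R(U,V)\cV\subseteq\cV$.

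The identification \eqref{f4} then requires $\bar R(U,V)W=0$ on $\cV$, which is item iv). To get it we compute $(R(U,V)W)^\cV$ a second time through $J$: $R(U,V)W=-JR(U,V)JW$, and $R(U,V)$ restricted to $\cH$ is computed with the horizontal vectors $JW$. Because $\n_{JW}$ acts as $\bar\n_{JW}$ and the $\cH$-leaves are totally geodesic, the horizontal part of $R(U,V)JW$ is $\bar R^{\cH}(U,V)JW+[T_U,T_V]JW$. Here $\bar R^{\cH}$ is the curvature of $\bar\n$ on the normal bundle, and it equals $J\bar R(U,V)J^{-1}$ because $\bar\n J=0$. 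The two expressions for the same quantity then differ by the sign of $J^2=-1$. Comparing them forces $\bar R(U,V)W=0$, which is iv), and leaves $[T_U,T_V]$. Finally $T_UT_V=T_UJ^{-1}JT_V=B_UB_V$ up to sign, by \eqref{K} and the definition of $B$; this gives $-[B_U,B_V]$. This sign bookkeeping is the main obstacle, and we will check it on the non-flat example only as a consistency test.

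For iii) we compute $g(\n_X\n_UV,Y)-g(\n_U\n_XV,Y)-g(\n_{[X,U]}V,Y)$ directly. Using $\n_X=\bar\n_X$, the first term is $g(\bar\n_X(T_UV),Y)$. The second term is $g(T_U\bar\n_XV,Y)$, since $\n_XV$ is vertical. For the third term we write $[X,U]=\bar\n_XU-\bar\n_UX-T_UX$. Expanding $\bar\n_X(T_UV)=(\bar\n_XT)_UV+T_{\bar\n_XU}V+T_U\bar\n_XV$, the $T_U\bar\n_XV$ terms cancel, and so do the $T_{\bar\n_XU}V$ terms. The term $\n_{\bar\n_UX}V$ is horizontal-directional, and by $A=0$ it has no horizontal part. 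The remaining term $T_{T_UX}V$ appears with a plus sign, which yields \eqref{mixed-curvature}.
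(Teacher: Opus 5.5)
Your items (i) and (iii) are correct. Item (i) is a direct substitute for the citation of Besse's (9.28e), and item (iii) is the paper's computation done without extending $X$ to a basic field. Item (ii), however, has a circular step. After Codazzi, $g(R(U,V)W,JZ)$ is exactly $(\bar\n_US)(V,W,Z)-(\bar\n_VS)(U,W,Z)$. The symmetry of $S$ and $\bar\n J=0$ only make $\bar\n_US$ symmetric in $V,W,Z$. Symmetry between the differentiation slot and the others is precisely the vanishing you are trying to prove, not an available input. Indeed, antisymmetry in $U,V$, symmetry in $W,Z$ (from $R(U,V)J=JR(U,V)$) and the first Bianchi identity are all satisfied by an arbitrary Codazzi-type tensor, so they cannot force it to vanish. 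The actual input is your own item (i), transported by the K\"ahler identity $R(JU,JV)=R(U,V)$:
\[
g(R(U,V)W,JZ)=g(R(JU,JV)W,JZ)=-g(R(JU,JV)JW,Z)=0 .
\]

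The more serious gap is your proof of (iv), on which the formula in (ii) depends. The ``second computation through $J$'' is not independent of the Gauss equation. Since $\bar R^{\cH}(U,V)JW=J\bar R(U,V)W$ and $T_U,T_V$ commute with $J$, you get
\[
-J\bigl(\bar R^{\cH}(U,V)JW+[T_U,T_V]JW\bigr)=\bar R(U,V)W+[T_U,T_V]W,
\]
which is literally the Gauss expression. There is no sign discrepancy, the comparison is a tautology, and nothing forces $\bar R=0$.

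What is missing is an independent reason why $\bar R^{\cH}(U,V)=0$ for vertical $U,V$; it comes from the transverse geometry. Since $A=0$, for $X\in\Gamma(\cH)$ one has $\bar\n_UX=(\n_UX)^{\cH}=[U,X]^{\cH}+A_XU=[U,X]^{\cH}$. So on $\cH$ the connection $\bar\n$ along the leaves is the Bott connection. Basic horizontal fields (for instance $\partial/\partial x^i$ in coordinates adapted to both foliations) then give a local frame of $\cH$ that is $\bar\n$-parallel along the leaves, hence $\bar R^{\cH}(U,V)=0$. This is exactly what the O'Neill formula for $g(R(U,V)X,Y)$ (Besse, Thm.~9.28) with $A=0$ encodes in the paper's proof, giving $g(R(U,V)X,Y)=g(T_UX,T_VY)-g(T_VX,T_UY)$ directly.

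From there, $\bar R(U,V)|_{\cV}=-J\bar R^{\cH}(U,V)J=0$ gives (iv). The Gauss equation then gives $R(U,V)|_{\cV}=[T_U,T_V]=-[B_U,B_V]$, using $B_UB_V=-T_UT_V$ exactly. Your proposed consistency check on the example would not detect any of this, since there all commutators $[T_U,T_V]$ vanish and the leaves are trivially flat.
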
 
\begin{proof} Formula (i) follows form \cite{Bes}, (9.28e) (beware the curvature in \cite{Bes} has the opposite sign), while the first statement of (ii) follows form (i) and the K\"ahler identities. Moreover using \cite{Bes}, Thm. 9.28, we have for $V,W\in\Gamma(\cV)$ and $X,Y\in\Gamma(\cH)$
$$g(R(V,W)X,Y) = g(T_VX,T_WY) - g(T_WX,T_VY)$$ 
and therefore if $V'=JX,W'=JY$
$$g(R(V,W)V',W') = g( T_V(-JV'),T_W(-JW')) - g(T_W(-JV'),T_V(-JW'))  $$
$$= g(T_VV',T_WW') - g(T_WV',T_VW') = g([T_V,T_W]V',W').$$ 
As for (iii), extend $X,Y$ as basic horizontal vector fields. Since
$A=0$, differentiation in a horizontal direction preserves both
$\mathcal V$ and $\mathcal H$, and since $X$ is basic,
$[X,U]$ is vertical. We have 

\begin{align*}
g(R(X,U)V,Y)
&=g(\bar\nabla_X(T_UV),Y)
  -g(T_U(\bar\nabla_XV),Y)
  -g(T_{[X,U]}V,Y)\\
&=g((\bar\nabla_XT)_UV,Y)
  +g(T_{\bar\nabla_XU-[X,U]}V,Y).
\end{align*}
Since
\[
\bar\nabla_XU-[X,U]=\nabla_UX=T_UX,
\]
we have
\begin{equation}\label{mixed-curvature}
g(R(X,U)V,Y)
=
g((\bar\nabla_XT)_UV,Y)
+g(T_{T_UX}V,Y).
\end{equation}

We then have that (iv) follows from \cite{Bes}, (9.28a).\end{proof}
\par\bigskip
\subsection{The Ricci tensor.} The next proposition states some facts on the Ricci tensor of $(\M,g)$ (see \cite{Bes}, \cite{Tondeur} for general formulas). It will be useful to denote by $N$ the mean curvature vector of the leaves of $\mathcal F$, namely
$$N := \sum_{i=1}^nT_{V_i}V_i,$$
where $\{V_i\}_{i=1,\ldots,n}$ is an o.n. basis in $\cV$.
\begin{proposition} We have 
\begin{itemize}
\item[i)] $c_1(M)=0\in H^2(\M,\mathbb R)$;
\item[ii)] $\ric(\cV,\cH)=0$;
\item[iii)] if $U,V\in \Gamma(\cV)$ 
we have \begin{equation}\label{ricci-main}
\Ric(U,V)
=
\sum_i g((\bar\nabla_{X_i}T)_UV,X_i)
-
g(T_UV,N);\end{equation}
\item[iv)] we have $$\operatorname{scal} = 2\operatorname{div}N.$$ 
\end{itemize}
\end{proposition}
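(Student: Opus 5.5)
For (i) I will use the Lagrangian splitting. Since $J$ maps $\cV$ isomorphically onto $\cH=J\cV$, the complex vector bundle $(T\M,J)$ is isomorphic to the complexification $\cV\otimes_{\mathbb R}\mathbb C$ of a real bundle. The complexification of a real bundle is isomorphic to its own conjugate, so $c_1(T\M)=c_1(\overline{T\M})=-c_1(T\M)$. Hence $2c_1(\M)=0$, and $c_1(\M)=0$ in real cohomology. Equivalently, the structure group reduces to ${\rm O}(n)\subset\U(n)$, and the induced connection on the determinant line bundle has curvature zero in de Rham cohomology.

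For (ii) and (iii) I will compute Ricci in an orthonormal frame $\{V_i\}$ of $\cV$ and $X_i=JV_i$ of $\cH$.

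For (ii), take $U\in\cV$ and $X\in\cH$, so $\Ric(U,X)=\sum_i g(R(V_i,U)X,V_i)+\sum_i g(R(X_i,U)X,X_i)$.
\begin{itemize}
\item[-] The second sum vanishes by Lemma \ref{curv}(i) together with the symmetries of $R$: $g(R(X_i,U)X,X_i)=g(R(X,X_i)X_i,U)$, which is $\pm g(R(\cH,\cH)\cH,\cV)=0$.
\item[-] For the first sum I use the Kähler identity $R(\cdot,\cdot)J=JR(\cdot,\cdot)$ to write $g(R(V_i,U)X,V_i)=g(R(V_i,U)JX,JV_i)$. This is a curvature term of the form $g(R(\cV,\cV)\cV,\cH)$, which vanishes by Lemma \ref{curv}(ii), since $R(U,V)$ preserves $\cV$.
\end{itemize}

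For (iii), take $U,V\in\cV$. Then $\Ric(U,V)=\sum_i g(R(V_i,U)V,V_i)+\sum_i g(R(X_i,U)V,X_i)$.
\begin{itemize}
\item[-] The vertical sum is the Ricci curvature of the leaf plus O'Neill correction terms. Equivalently, by \eqref{f4} it is $-\sum_i g([B_{V_i},B_U]V,V_i)$. Expanding with the total symmetry of $S$ gives $\sum_i\big(S(U,V_i,B_VV_i)-S(V_i,V_i,B_UV)\big)$.
\item[-] For the horizontal sum I apply \eqref{mixed-curvature}, which gives $\sum_i g((\bar\n_{X_i}T)_UV,X_i)+\sum_i g(T_{T_UX_i}V,X_i)$.
\item[-] I then rewrite $\sum_i g(T_{T_UX_i}V,X_i)$ using $T_UX_i=-JB_U\ldots$. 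Since $T_UX_i$ is vertical, this term becomes $-\sum_i S(T_UX_i,V,V_i)$-type expressions, i.e. $-\operatorname{tr}(B_UB_V)$-type terms.
\item[-] The $\operatorname{tr}(B_UB_V)$ terms cancel against the first term of the vertical sum. The remaining $-\sum_i S(V_i,V_i,B_UV)$ equals $-g(T_UV,N)$, since $g(N,JW)=\sum_i S(V_i,V_i,W)$.
\end{itemize}
The main obstacle is this sign and index bookkeeping; I will handle it by expressing everything through $S$ and $B$.

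For (iv), I first note that $\Ric$ is $J$-invariant, so $\Ric(JU,JU)=\Ric(U,U)$ and $\operatorname{scal}=2\sum_i\Ric(V_i,V_i)$. By (iii),
\[
\operatorname{scal}=2\sum_{i,k}g((\bar\n_{X_k}T)_{V_i}V_i,X_k)-2|N|^2=2\sum_k g(\bar\n_{X_k}N,X_k)-2|N|^2.
\]
Here the second equality holds because $\bar\n$ preserves $\cV$, so it commutes with the trace over $\cV$.

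Finally I compute $\operatorname{div}N$ with the Levi-Civita connection. The horizontal part is $\sum_k g(\n_{X_k}N,X_k)=\sum_k g(\bar\n_{X_k}N,X_k)$. The vertical part is $\sum_i g(\n_{V_i}N,V_i)=-\sum_i g(N,T_{V_i}V_i)=-|N|^2$. Adding these gives $\operatorname{div}N=\sum_k g(\bar\n_{X_k}N,X_k)-|N|^2$, and therefore $\operatorname{scal}=2\operatorname{div}N$.
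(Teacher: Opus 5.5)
Your proposal is correct and follows the paper's proof essentially step by step. You use the conjugate-bundle argument for (i) and the frame splitting of $\Ric$ into vertical and horizontal sums via \eqref{f4} and \eqref{mixed-curvature} for (ii)--(iii). For (iv) you use $J$-invariance of $\Ric$ and trace \eqref{ricci-main}, and you also justify $\operatorname{div}N=\sum_k g(\bar\nabla_{X_k}N,X_k)-|N|^2$, which the paper leaves implicit; in (ii) you kill the horizontal sum by pair symmetry and Lemma \ref{curv}(i) instead of the K\"ahler identity and Lemma \ref{curv}(ii), which is equally valid.

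The one line to clean up is the bookkeeping in (iii). The correct identity is $T_UX_i=T_U(JV_i)=-B_UV_i$, so $\sum_i g(T_{T_UX_i}V,X_i)=+\sum_i S(T_UX_i,V,V_i)=-\sum_i g(B_UV_i,B_VV_i)$. Your written ``$-\sum_i S(T_UX_i,V,V_i)$'' has the wrong sign if read literally. With the corrected identity, this term cancels $\sum_i S(U,V_i,B_VV_i)$ from the vertical sum exactly as you claim.
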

\begin{proof} As for (i), since $\mathcal F$ is Lagrangian, we have $TM=\mathcal V\oplus J\mathcal V$ and in particular, as a complex vector bundle, $(TM,J)\simeq \mathcal V\otimes_{\mathbb R}\mathbb C$. Thus the structure group of $(TM,J)$ reduces from $U(n)$ to $O(n)$. For every real vector bundle $E$, one has
$\overline{E_{\mathbb C}}\simeq E_{\mathbb C}$ 
and therefore
\[
c_k(E_{\mathbb C})
 =c_k(\overline{E_{\mathbb C}})
 =(-1)^k c_k(E_{\mathbb C}).
\]
It follows that $2c_{k}(TM,J)=0$ for all odd $k$. In particular $c_1(M) = 0 \in H^2(M,\mathbb R)$. This fact was also noticed in \cite{Vaisman}.\par
As for (ii), let $U,W\in\Gamma(\mathcal V)$ and put $X=JW\in\Gamma(\mathcal H)$, so that 

\[
\Ric(U,X)
=
\sum_i R(V_i,U,X,V_i)
+
\sum_i R(X_i,U,X,X_i).
\]
The first term vanishes by Lemma \ref{curv}, (ii).  For the second term, using $R(A,B)J=JR(A,B)$ and $JX_i=-V_i$, we have
\begin{align*}
R(X_i,U,JW,X_i)
&=-R(X_i,U,W,JX_i)
=R(X_i,U,W,V_i)\\
&=R(W,V_i,X_i,U)=-R(W,V_i,U,X_i)=0,
\end{align*}
where in the last equality we have again used Lemma \ref{curv}, (ii). Hence $\ric(U,X)=0$.\par
As for (iii), let $\{V_1,\ldots,V_n\}$ be a local orthonormal frame of $\mathcal V$ and put
$X_i=JV_i$. For $U,V\in\Gamma(\mathcal V)$ we have
\[
\Ric(U,V)
=
\sum_i g(R(V_i,U)V,V_i)
+
\sum_i g(R(X_i,U)V,X_i).
\]

We first consider the vertical contribution. By \eqref{f4},
\[
R(V_i,U)|_{\mathcal V}
=
-[B_{V_i},B_U],
\]
and therefore
\begin{align*}
\sum_i g(R(V_i,U)V,V_i)
&=
-\sum_i g([B_{V_i},B_U]V,V_i)\\
&=
-\sum_i g(B_{V_i}B_UV,V_i)
+\sum_i g(B_UB_{V_i}V,V_i).
\end{align*}
If
\[
\eta:=\sum_iB_{V_i}V_i=-JN,
\]
then, using the total symmetry of $S$,
\[
-\sum_i g(B_{V_i}B_UV,V_i)
=
-g(B_UV,\eta)
=
-g(T_UV,N),
\]
whereas
\[
\sum_i g(B_UB_{V_i}V,V_i)
=
\sum_i g(B_UV_i,B_VV_i).
\]
Hence
\begin{equation}\label{ricci-vertical-part}
\sum_i g(R(V_i,U)V,V_i)
=
-g(T_UV,N)
+
\sum_i g(B_UV_i,B_VV_i).
\end{equation}

For the horizontal contribution we use \eqref{mixed-curvature}:
\begin{align*}
\sum_i g(R(X_i,U)V,X_i)
&=
\sum_i g((\bar\nabla_{X_i}T)_UV,X_i)
+
\sum_i g(T_{T_UX_i}V,X_i).
\end{align*}
Since
\[
T_UX_i=T_U(JV_i)=-B_UV_i,
\]
the total symmetry of $S$ gives
$$
\sum_i g(T_{T_UX_i}V,X_i)=
-\sum_i S(B_UV_i,V,V_i)=
-\sum_i g(B_UV_i,B_VV_i).$$
Therefore
\begin{equation}\label{ricci-horizontal-part}
\sum_i g(R(X_i,U)V,X_i)
=
\sum_i g((\bar\nabla_{X_i}T)_UV,X_i)
-
\sum_i g(B_UV_i,B_VV_i).
\end{equation}

Adding \eqref{ricci-vertical-part} and
\eqref{ricci-horizontal-part}, the quadratic terms cancel and we obtain
\eqref{ricci-main}. \par
As for (iv) we have 
\begin{align*}
\sum_j\Ric(V_j,V_j)
&=
\sum_{i,j}
g((\bar\nabla_{X_i}T)_{V_j}V_j,X_i)
-|N|^2\\
&=
\sum_i g(\bar\nabla_{X_i}N,X_i)-|N|^2=
\operatorname{div}N
\end{align*}
and the claim follows using the $J$-invariance of the Ricci tensor. \end{proof}
\begin{corollary} If all the leaves of $\mathcal F$ are minimal, then $(\M,g)$ is Ricci flat.
\end{corollary}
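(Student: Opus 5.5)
If all the leaves of $\mathcal F$ are minimal, then $N\equiv 0$. Part (iv) of the preceding Proposition then gives $\operatorname{scal}=2\operatorname{div}N=0$, so $g$ is scalar-flat. The plan is to upgrade scalar-flatness to Ricci-flatness using the Kähler structure, compactness, and the vanishing of the real first Chern class from part (i).

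Let $\rho$ denote the Ricci form of $(\M,g,J)$, given by $\rho(X,Y)=\Ric(JX,Y)$. It is a closed real $(1,1)$-form representing $2\pi c_1(\M)$. By part (i), $[\rho]=0$ in $H^2(\M,\mathbb R)$. The $\partial\bar\partial$-lemma on the compact Kähler manifold $\M$ then gives a smooth real function $f$ with $\rho=i\partial\bar\partial f$, up to the normalization constant of one's conventions.

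Now take the trace of this identity with respect to $\omega$. The trace $\Lambda\rho$ is a positive constant multiple of $\operatorname{scal}$, and $\Lambda(i\partial\bar\partial f)$ is a constant multiple of $\Delta f$. Hence scalar-flatness gives $\Delta f=0$ on $\M$. Since $\M$ is compact, the maximum principle (or integration by parts, $\int_\M|df|^2=0$) shows that $f$ is constant. Therefore $\rho=i\partial\bar\partial f=0$. Because $\Ric$ is $J$-invariant, $\Ric(X,Y)=\rho(X,JY)$, so $\Ric\equiv 0$.

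No step here is a real obstacle. The only point requiring care is the $\partial\bar\partial$-lemma, which needs compactness and the Kähler condition. An alternative that avoids it: since $\rho$ is exact, the quantity $\int_\M\rho\wedge\omega^{n-1}$ vanishes, and this integral is proportional to the total scalar curvature. That alone is weaker than what we need, so the pointwise argument through $\rho=i\partial\bar\partial f$ is the one to use.
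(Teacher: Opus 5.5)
Your proposal is correct and follows essentially the same route as the paper: minimality gives scalar-flatness via $\operatorname{scal}=2\operatorname{div}N$, then $c_1(\M)=0$ in real cohomology and the $\partial\bar\partial$-lemma write the Ricci form as $i\partial\bar\partial f$, and taking the trace shows $f$ is harmonic, hence constant, so $\Ric\equiv 0$. You are also right that the integral identity $\int_\M\rho\wedge\omega^{n-1}=0$ alone would only control the total scalar curvature and so is not enough.
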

\begin{proof} Indeed in this case $N=0$ and $M$ is scalar flat. We already know that $c_1(M)=0$, i.e. the Ricci form is $dd^c\varphi$ for some smooth function $\varphi$. As the scalar curvature vanishes, the function $\varphi$ is harmonic, hence constant and our claim follows. \end{proof}

\par\bigskip

\section{A Ros' formula for the horizontal foliation and the proof of the main theorem.}

We now like to adapt an integral formula used by Ros in \cite{Ros} to our setting. In \cite{Ros} the following Lemma is stated and proved 
\begin{lemma} Let $(\M,g)$ be a compact Riemannian manifold and let $T$ be a $k$-covariant tensor. Then 
$$\int_{UM} \nabla T(u,\ldots,u) d\nu =0,$$
where $d\nu$ is the canonical measure on the unit tangent bundle $UM$ of $\M$.\end{lemma}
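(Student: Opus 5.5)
The plan is to identify the integrand as a derivative along the geodesic flow, so that the integral vanishes by Liouville's theorem. Let $G$ be the geodesic vector field on $UM$, with flow $\phi_t(p,u)=(\gamma_u(t),\dot\gamma_u(t))$. Define the smooth function
\[
f:UM\to\mathbb R,\qquad f(p,u)=T_p(u,\ldots,u).
\]

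First I compute $G f$. Along a unit speed geodesic $\gamma$ we have $\nabla_{\dot\gamma}\dot\gamma=0$. Hence
\[
\frac{d}{dt}\,T(\dot\gamma,\ldots,\dot\gamma)=(\nabla_{\dot\gamma}T)(\dot\gamma,\ldots,\dot\gamma)+\sum_{j}T(\dot\gamma,\ldots,\nabla_{\dot\gamma}\dot\gamma,\ldots,\dot\gamma)=(\nabla_{\dot\gamma}T)(\dot\gamma,\ldots,\dot\gamma).
\]
Evaluating at $t=0$ gives $(Gf)(p,u)=\nabla T(u,u,\ldots,u)$, where the derivative slot is also filled by $u$. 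So the integrand in the statement is exactly $Gf$.

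Next I use Liouville's theorem: the geodesic flow preserves the canonical measure $d\nu$ on $UM$, i.e. $\operatorname{div}_{d\nu}G=0$. One way to see this is that $d\nu$ is, up to a constant, $\theta\wedge(d\theta)^{2n-1}$ restricted to $UM$, where $\theta$ is the contact form. The flow $G$ is the Reeb field of $\theta$, so $\mathcal L_G\theta=0$. Alternatively, $d\nu$ is locally the product of the Riemannian measure and the round measure on the fibres, and Liouville's theorem on $TM$ together with conservation of $|u|$ gives the claim.

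Finally, since $UM$ is compact (because $M$ is) and has no boundary,
\[
\int_{UM} Gf\,d\nu=\int_{UM}\operatorname{div}_{d\nu}(fG)\,d\nu-\int_{UM} f\operatorname{div}_{d\nu}G\,d\nu=0,
\]
by the divergence theorem and invariance of the measure. Equivalently, $\frac{d}{dt}\int f\circ\phi_t\,d\nu=0$. The only step that is not routine is the invariance of $d\nu$. I would quote it as the classical Liouville theorem rather than reprove it, because the later foliated version requires recomputing exactly this divergence, which there turns out to be $-g(N,v)$.
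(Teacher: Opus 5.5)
Your proposal is correct and is essentially the geometric proof the paper sketches, rather than Ros' original argument via Weyl's theorem on invariants: you write the integrand as $G_{UM}\tau$ with $\tau(u)=T(u,\ldots,u)$, invoke Liouville's theorem $\mathcal L_{G_{UM}}d\nu=0$, and integrate by parts on the compact manifold $UM$. One cosmetic point: for a general $m$-dimensional $M$ the Liouville form is $\theta\wedge(d\theta)^{m-1}$, which agrees with your $(d\theta)^{2n-1}$ only when $\dim M=2n$.
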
 
The proof of this powerful tool is provided using algebraic methods, in particular with Weyl's theorem on invariants. A more geometric proof can be given using the vector field $G_{UM}$ on $UM$ that induces the geodesic flow. Indeed, by Liouville's theorem $\mathcal L_{G_{UM}}d\nu = 0$ (see e.g. \cite[Lemma~3.6.4]{PSU}) and moreover if $\tau\in C^\infty(UM)$ is the map given by $\tau(u)=T(u,\ldots,u)$, then $(G_{UM}\tau)_u = \nabla_uT(u,\ldots,u)$ (see e.g. \cite[Lemma~2.11]{Lef}), so that the claim in Ros' Lemma is proved using an integration by parts argument.

We now consider the horizontal distribution $\cH$. Since $A=0$, the
distribution $\cH$ is integrable and its leaves are totally geodesic submanifolds of $M$.

Let
\[
U\cH:=\{(p,X)\in\cH:\ |X|=1\}.
\]
We endow $U\cH$ with the natural measure $d\nu_{\cH}$ obtained by
integrating the standard spherical measure on $U\cH_p$ against the
Riemannian measure of $M$: for every $f\in C^\infty(U\cH)$
\[
\int_{U\cH}f\,d\nu_{\cH}
=
\int_M
\left(
\int_{U\cH_p}f(p,v)\,d\sigma_p(v)
\right)d\mu_M(p).
\]

Let $G_{\cH}$ denote the geodesic vector field of the horizontal
foliation. Thus, if $(p,X)\in U\cH$ and $\gamma$ is the geodesic in
the horizontal leaf through $p$ such that $\gamma(0)=p,\dot\gamma(0)=X$, 
then
\[
G_{\cH}(p,X)
=
\left.\frac{d}{dt}\right|_{t=0}
(\gamma(t),\dot\gamma(t)).
\]
Since the horizontal leaves are totally geodesic, $\gamma$ is also an
ambient geodesic, hence $\n_{\dot\gamma}\dot\gamma=0$. Moreover $G_\cH$ is tangent to $U\cH$ as the metric is bundle-like and, is $\pi:U\cH\to M$ denotes the projection, then 
$\pi_*G_\cH|_{(p,X)}=X$ for every $(p,X)\in U\cH$.

We now compute explicitly the divergence of $G_{\cH}$ with
respect to $d\nu_{\cH}$.

\begin{lemma}\label{horizontalLiouville}
For $(p,X)\in U\cH$ one has
\beq\label{horizontaldivergence}
\operatorname{div}_{d\nu_{\cH}}G_{\cH}(p,X)
=
-g(N,X).
\eeq
In particular, if the leaves of $\cF$ are minimal, then
\[
\operatorname{div}_{d\nu_{\cH}}G_{\cH}=0.
\]
\end{lemma}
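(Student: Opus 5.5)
We compute the divergence through the flux of the measure along the flow, working in local adapted coordinates. Since $A=0$ and the metric is bundle-like, near any point we may choose coordinates $(x^1,\dots,x^n,y^1,\dots,y^n)$ such that the leaves of $\cF$ are $\{x=\mathrm{const}\}$ and the leaves of $\cH$ are $\{y=\mathrm{const}\}$. In these coordinates the metric has the block form
\[
g=g_{ab}(x)\,dx^adx^b+h_{\alpha\beta}(x,y)\,dy^\alpha dy^\beta .
\]
The horizontal block depends only on $x$ by the bundle-like condition \eqref{bundle}, since basic fields $\partial_{x^a}$ have constant inner products along the leaves. The off-diagonal terms vanish because $\cH\perp\cV$ and both distributions are coordinate ones.

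First we write $U\cH$ locally as $\{(x,y,\xi):g_{ab}(x)\xi^a\xi^b=1\}$, with $\xi$ the horizontal component. The fibre measure $d\sigma_p$ is the round measure on this ellipsoid, and $d\mu_M=\sqrt{\det g(x)}\sqrt{\det h(x,y)}\,dx\,dy$. Next we write $G_\cH$. Because the horizontal leaves are totally geodesic, along a horizontal geodesic $y$ stays constant and $x(t)$ solves the geodesic equation of the leaf metric $g_{ab}(x)\,dx^adx^b$ restricted to $\{y=y_0\}$. So $G_\cH=\xi^a\partial_{x^a}-\Gamma^a_{bc}(x)\xi^b\xi^c\partial_{\xi^a}$, and this field is independent of $y$ and has no $\partial_y$ component.

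Now we split the measure as
\[
d\nu_\cH=\Big(d\sigma^{g(x)}\sqrt{\det g(x)}\,dx\Big)\cdot \sqrt{\det h(x,y)}\,dy .
\]
The first factor is precisely the Liouville measure on the unit tangent bundle of the Riemannian manifold $(\{y=y_0\},g_{ab}(x)dx^adx^b)$. By Liouville's theorem (as recalled above, \cite[Lemma~3.6.4]{PSU}) it is preserved by the geodesic field of that leaf, which is $G_\cH$ for fixed $y$. Since $G_\cH$ acts trivially on $y$, we get
\[
\operatorname{div}_{d\nu_\cH}G_\cH = G_\cH\big(\log\sqrt{\det h}\big)=\tfrac12\,X\big(\log\det h\big),
\]
where $X=\pi_*G_\cH=\xi^a\partial_{x^a}$.

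The key remaining step, and the only real computation, is to identify $\tfrac12 X(\log\det h)$ with $-g(N,X)$. Here $\tfrac12\log\det h$ is the log of the leafwise volume density of $\cF$. Its derivative along a basic horizontal field $X$ equals $\sum_i g(\n_{V_i}X,V_i)$, the divergence of $X$ along the leaf. This is because $\mathcal L_X$ of the leaf volume form is $\operatorname{div}_{\cF}X\cdot \mathrm{vol}_\cF$ and the coordinate fields $\partial_{y^\alpha}$ commute with $X$. Finally, for a vertical orthonormal frame $\{V_i\}$,
\[
\sum_i g(\n_{V_i}X,V_i)=-\sum_i g(X,\n_{V_i}V_i)=-\sum_i g(X,T_{V_i}V_i)=-g(N,X).
\]
The main obstacle is to justify the decomposition of the measure and the $y$-independence of $G_\cH$ carefully, which is why we use the coordinates above. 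Once that is done, the minimal case follows immediately, since $N=0$.
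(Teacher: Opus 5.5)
Your proof is correct and follows essentially the same route as the paper. Both use the same adapted coordinates with a $y$-independent horizontal block, factor $d\nu_{\cH}$ as the leafwise Liouville measure times the square root of the determinant of the vertical block, apply Liouville's theorem on each horizontal leaf, and conclude with Rummler's identity $X(\log w)=-g(N,X)$. The differences are cosmetic: you write $G_{\cH}$ explicitly in the coordinates $(x,y,\xi)$ to justify its $y$-independence, and you derive the last step from the Lie derivative of the leafwise volume form, whereas the paper computes $\tfrac12 k^{\alpha\beta}X(k_{\alpha\beta})$ directly using $[X_i,Y_\alpha]=0$.
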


\begin{proof}
Since both $\cV$ and $\cH$ are integrable, locally we can choose
coordinates $(x^1,\ldots,x^n,y^1,\ldots,y^n)$ 
such that the $x$-directions are horizontal and the $y$-directions
are vertical. As the two distributions are orthogonal, the metric
has the form
\[
g
=
h_{ij}(x,y)\,dx^i dx^j
+
k_{\alpha\beta}(x,y)\,dy^\alpha dy^\beta.
\]

The original foliation $\cF$ is Riemannian, hence its transverse
metric is basic. Therefore
\[
\frac{\partial h_{ij}}{\partial y^\alpha}=0,
\]
so that $h_{ij}=h_{ij}(x)$. 
Hence
\beq\label{volumeproducthorizontal}
d\mu_M
=
\sqrt{\det h(x)}
\sqrt{\det k(x,y)}
\,dx\,dy.
\eeq

For fixed $y$, the corresponding horizontal leaf has Riemannian
metric $h_{ij}(x)\,dx^i dx^j$. 
Let $d\lambda_{\cH}$ denote the standard measure on the
unit tangent bundle of this horizontal leaf, i.e. 
$$d\lambda_{\cH} = \sqrt{\det h(x)} dx^1\ldots dx^n \cdot d\sigma_{h,x},    $$
where $d\sigma_{h,x}$ is the standard measure on the unit sphere in $\cH_x$.
The measure on
$U\cH$ can therefore be written locally as
\beq\label{horizontalmeasure}
d\nu_{\cH}
=
d\lambda_{\cH}\,
w(x,y)\,dy,
\qquad
w(x,y):=\sqrt{\det k(x,y)}.
\eeq

The geodesic flow of the horizontal leaf preserves
$d\lambda_{\cH}$. Hence the only contribution to the divergence of
$G_{\cH}$ with respect to $d\nu_{\cH}$ comes from the factor $w$.
Thus
\beq\label{weighteddivergence}
\operatorname{div}_{d\nu_{\cH}}G_{\cH}
=
G_{\cH}(\log w).
\eeq
As the function $\log w$ at a point $(p,X)\in U\cH$ only depends on the point $p\in M$, we see that  
\beq\label{weighteddivergence}
\operatorname{div}_{d\nu_{\cH}}G_{\cH}|_{(p,X)}
=
G_{\cH}|_{(p,X)}(\log w) = X(\log w).
\eeq

We now see how this derivative relates to the mean curvature of the vertical
leaves. Put
\[
Y_\alpha=\frac{\partial}{\partial y^\alpha},
\qquad
X_i=\frac{\partial}{\partial x^i}.
\]
We have 
\beq\label{Xlogw}
X(\log w)
=
\frac12 \sum_{\a,\b=1}^n k^{\alpha\beta}X(k_{\alpha\beta}).
\eeq
Now
\[
k_{\alpha\beta}=g(Y_\alpha,Y_\beta),
\]
and therefore, using the metric compatibility of $\n$,
\[
X_i(k_{\alpha\beta})
=
g(\n_XY_\alpha,Y_\beta)
+
g(Y_\alpha,\n_XY_\beta).
\]
Since the coordinates are adapted to the two foliations,  $[X_i,Y_\alpha]=0$ 
and moreover $\n_{X_i}Y_\alpha=\n_{Y_\alpha}X_i$.  As $X_i$ is horizontal, we obtain
\begin{align*}
X_i(k_{\alpha\beta})
&=
g(\n_{Y_\alpha}X_i,Y_\beta)
+
g(Y_\alpha,\n_{Y_\beta}X_i)\\
&=
-g(X_i,\n_{Y_\alpha}Y_\beta)
-g(X_i,\n_{Y_\beta}Y_\alpha).
\end{align*}
Therefore
\beq\label{Xlow2}
\frac12 k^{\alpha\beta}X_i(k_{\alpha\beta})
=
-k^{\alpha\beta}
g(\n_{Y_\alpha}Y_\beta,X_i).
\eeq
Since the mean curvature vector of the leaves is $N=
\sum_{\a,\b=1}^nk^{\alpha\beta}
(\n_{Y_\alpha}Y_\beta)^{\cH}$, by \eqref{weighteddivergence}, \eqref{Xlogw} and \eqref{Xlow2} we get 
$$
X_i(\log w)=-g(N,X_i),\qquad i=1,\ldots,n,$$
hence our claim. \end{proof}
Note that the identity $X(\log w)=-g(N,X)$ is the local expression of the classical formula of Rummler relating
the transverse variation of the leafwise volume form to the mean
curvature form (see \cite{Rummler}).

\begin{remark}
The tangent sphere bundle of a foliation and its leafwise geodesic
flow have been studied by Rovenski and Walczak
\cite{RovenskiWalczak}. In particular, for a totally geodesic
foliation they express the Jacobian of this flow in terms of the
transverse Jacobi tensor.  In our situation, applied to the horizontal
foliation, the first variation of their formula gives
\[
\operatorname{div}_{d\nu_{\cH}}G_{\cH}(p,X)
=\operatorname{tr}(Y\mapsto(\n_YX)^\cV)
=-g(N,X).
\]
We have included the above direct computation for completeness.
\end{remark}

\begin{remark}
The same defect can be seen directly on $M$. If $X$ is horizontal and
$\{X_i\}$, $\{V_\alpha\}$ are local orthonormal frames of $\cH$ and
$\cV$, respectively, then
\begin{align*}
\operatorname{div}_M X
&=
\sum_i g(\n_{X_i}X,X_i)
+
\sum_\alpha g(\n_{V_\alpha}X,V_\alpha)\\
&=
\operatorname{div}_{\cH}X
-
\sum_\alpha g(X,\n_{V_\alpha}V_\alpha)\\
&=
\operatorname{div}_{\cH}X-g(X,N).
\end{align*}
Thus the mean curvature $N$ measures exactly the failure of the
ambient measure to be invariant under the horizontal geodesic flow.
\end{remark}

If $N=0$, Lemma~\ref{horizontalLiouville} shows that $G_{\cH}$
preserves $d\nu_{\cH}$. We therefore obtain a horizontal analogue of
Ros' formula.

\begin{proposition}\label{horizontalRos}
Assume that the leaves of $\cF$ are minimal. Let $Q$ be a covariant
$m$-tensor on $\cV$. Then
\[
\int_{U\cH}
(\bar\nabla_vQ)(Jv,\ldots,Jv)\,d\nu_{\cH}=0.
\]
\end{proposition}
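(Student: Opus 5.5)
Define $\tau\in C^\infty(U\cH)$ by $\tau(p,v):=Q(Jv,\ldots,Jv)$. Note that $Jv\in\cV_p$ because $v\in\cH_p=J\cV_p$. The argument mirrors the geometric proof of Ros' Lemma: we identify $G_\cH\tau$ with the integrand and then integrate by parts using Lemma~\ref{horizontalLiouville}.

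\textbf{Step 1: computing $G_\cH\tau$.} Let $(p,v)\in U\cH$ and let $\gamma$ be the horizontal geodesic with $\gamma(0)=p$ and $\dot\gamma(0)=v$. Since the horizontal leaves are totally geodesic, $\n_{\dot\gamma}\dot\gamma=0$. Because $\dot\gamma$ is horizontal, this gives $\bar\n_{\dot\gamma}\dot\gamma=(\n_{\dot\gamma}\dot\gamma)^\cH=0$. Since $\bar\n J=0$, the vertical field $J\dot\gamma$ along $\gamma$ is then $\bar\n$-parallel. Choose a $\bar\n$-parallel frame along $\gamma$ adapted to $\cV$; this is possible because $\bar\n$ preserves $\cV$. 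In that frame
\[
(G_\cH\tau)(p,v)=\tfrac{d}{dt}\big|_{t=0}Q(J\dot\gamma,\ldots,J\dot\gamma)=(\bar\n_vQ)(Jv,\ldots,Jv),
\]
by the Leibniz rule for $\bar\n$ acting on tensors on $\cV$. Here $\bar\n Q$ denotes the induced connection on $\otimes^m\cV^*$. This is the same mechanism as \cite[Lemma~2.11]{Lef}, with $\n$ replaced by $\bar\n$. It only works because horizontal geodesics are $\bar\n$-geodesics and $J$ is $\bar\n$-parallel, so this is where $A=0$ enters.

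\textbf{Step 2: integrating by parts.} Because $U\cH$ is compact (it is a sphere bundle over the compact manifold $\M$) and has no boundary,
\[
\int_{U\cH}G_\cH\tau\,d\nu_\cH=-\int_{U\cH}\tau\,\operatorname{div}_{d\nu_\cH}G_\cH\,d\nu_\cH .
\]
We need $G_\cH$ to be tangent to $U\cH$, which was noted before Lemma~\ref{horizontalLiouville}. With that, this is the divergence theorem $\int\mathcal L_{G_\cH}(\tau\,d\nu_\cH)=0$. Lemma~\ref{horizontalLiouville} gives $\operatorname{div}_{d\nu_\cH}G_\cH=-g(N,v)$, and this vanishes under the minimality hypothesis. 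Hence the right-hand side is zero, and Step~1 turns the left-hand side into the claimed integral.

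\textbf{Main obstacle.} The delicate point is Step~1. There we must check that the curve $t\mapsto(\gamma(t),\dot\gamma(t))$ is exactly the integral curve of $G_\cH$. We also need the derivative of $Q$ along it to involve $\bar\n$ rather than the Levi-Civita connection. Using $\n$ would produce extra $T$-terms, since $\n_v$ of a vertical field has a horizontal component. Those terms disappear because $Q$ is evaluated only on vertical arguments and $J\dot\gamma$ is $\bar\n$-parallel. Once this identification is made, the rest is a formal consequence of Lemma~\ref{horizontalLiouville}.
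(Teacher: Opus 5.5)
Your proposal is correct and follows essentially the same route as the paper. Like the paper, you use $\bar\nabla_{\dot\gamma}\dot\gamma=0$ and $\bar\nabla J=0$ along horizontal geodesics to identify $G_{\cH}\widehat Q$ with $(\bar\nabla_vQ)(Jv,\ldots,Jv)$. You then integrate over the compact $U\cH$, using that $\operatorname{div}_{d\nu_{\cH}}G_{\cH}=-g(N,v)$ vanishes by Lemma~\ref{horizontalLiouville}.
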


\begin{proof}
Let $G_{\cH}$ denote the horizontal geodesic vector field on $U\cH$.
By the preceding discussion, minimality implies that $G_{\cH}$
preserves the natural measure $d\nu_{\cH}$.

Let $\gamma$ be a horizontal geodesic and put $v=\dot\gamma$. Since
$\cH$ is totally geodesic, $\bar\nabla_vv=\nabla_vv=0$. 
Moreover, since $\bar\nabla J=0$,
then 
$\bar\nabla_v(Jv)=0$.
Hence, if
\[
\widehat Q(p,v):=Q_p(Jv,\ldots,Jv),
\]
then
\[
G_{\cH}\widehat Q
=
(\bar\nabla_vQ)(Jv,\ldots,Jv).
\]
Since $G_{\cH}$ preserves $d\nu_{\cH}$ and $U\cH$ is compact,
\[
0
=
\int_{U\cH}G_{\cH}\widehat Q\,d\nu_{\cH}
=
\int_{U\cH}
(\bar\nabla_vQ)(Jv,\ldots,Jv)\,d\nu_{\cH}.
\]
\end{proof}

\par\bigskip

\section{The proof ot the main Theorem.}

From now on we assume that all leaves of $\mathcal F$ are minimal, i.e. 
\beq\label{standingassumptions}
N=0.
\eeq
Note that in particular,
\beq\label{traceSminimal}
\tr S=0.
\eeq

As we like to understand the variation of $S$ along the horizontal leaves, we introduce the covariant tensor $C$ on $\cV$ defined by
\beq\label{definitionC}
C(U,V,W,Z)
:=
(\bar\n_{JU}S)(V,W,Z).
\eeq

We first prove two properties of $C$.

\begin{lemma}\label{symmetryC}
Under the assumption \eqref{standingassumptions}, one has
\[
C\in S^4(\cV^*),
\qquad
\tr C=0.
\]
\end{lemma}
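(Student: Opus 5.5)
The plan is to obtain the symmetry of $C$ from the mixed curvature identity \eqref{mixed-curvature} and the Kähler symmetries of $R$, and to obtain the vanishing of the trace from the minimality assumption $\tr S=0$.

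\textbf{Symmetry in the last three entries.} For fixed $U$, $C(U,\cdot,\cdot,\cdot)=\bar\n_{JU}S$. The connection $\bar\n$ preserves $\cV$ and is metric, so covariant differentiation of the section $S\in S^3(\cV^*)$ yields again a section of $S^3(\cV^*)$. Hence $C$ is symmetric in $V,W,Z$, and it remains only to show symmetry under $U\leftrightarrow V$.

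\textbf{Symmetry in the first two entries.} I will express $C$ through curvature. Since $\bar\n J=0$ and $\bar\n g=0$, differentiating $S(V,W,Z)=g(T_VW,JZ)$ gives
$$C(U,V,W,Z)=g\big((\bar\n_{JU}T)_VW,JZ\big).$$
Now apply \eqref{mixed-curvature} with $X=JU$ and $Y=JZ$:
$$C(U,V,W,Z)=g(R(JU,V)W,JZ)-g\big(T_{T_VJU}W,JZ\big).$$
Next, $T_VJU=JT_VU=-B_VU$ is vertical. Therefore the last term equals $g(T_{B_VU}W,JZ)=S(B_VU,W,Z)$. By the symmetry of $B_V$ and the total symmetry of $S$,
$$S(B_VU,W,Z)=\sum_i S(U,V,V_i)\,S(V_i,W,Z),$$
which is symmetric in $U,V$.

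For the curvature term, use the Kähler identity $R(JA,JB)=R(A,B)$. It gives
$$R(JU,V)=R(J^2U,JV)=-R(U,JV)=R(JV,U).$$
So $g(R(JU,V)W,JZ)$ is also symmetric in $U,V$. Together with the previous step, this shows $C\in S^4(\cV^*)$.

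\textbf{Trace.} Because $C$ is totally symmetric, it suffices to contract the last two slots. Let $\{V_i\}$ be a local orthonormal frame of $\cV$. Since $\bar\n$ is metric and preserves $\cV$, contraction commutes with $\bar\n_{JU}$, so
$$\sum_iC(U,V,V_i,V_i)=\big(\bar\n_{JU}(\tr S)\big)(V).$$
By \eqref{traceSminimal} we have $\tr S=0$, hence this contraction vanishes.

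As a cross-check, contracting the curvature expression for $C$ instead reproduces \eqref{ricci-main}. With $N=0$ and the Ricci-flatness established in the Corollary, this again gives zero.

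The only step requiring care is the sign bookkeeping when passing from $\bar\n T$ to $\bar\n S$ and in the term $T_{T_VJU}$. I expect this to be the main point to double-check, but in any case that term is manifestly symmetric in $U,V$ whatever its sign.
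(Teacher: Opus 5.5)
Your proof is correct. For the symmetry of $C$ you follow the paper's argument: rewrite $C$ through \eqref{mixed-curvature} as $g(R(JU,V)W,JZ)+S(B_UV,W,Z)$ and use $R(JU,V)=R(JV,U)$. For the trace, however, you take a different route.

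The paper contracts the first and last slots and recognizes $\sum_i C(V_i,U,V,V_i)$ as the right-hand side of \eqref{ricci-main}. It then concludes from $N=0$ together with the Corollary ($\Ric=0$), which itself rests on $c_1(M)=0$, the $dd^c$-lemma and compactness. You instead contract the last two slots and use only that $\bar\n$ is a metric connection on $\cV$. Then $\sum_i C(U,V,V_i,V_i)=(\bar\n_{JU}\tr S)(V)$, which vanishes by \eqref{traceSminimal}.

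Your route is purely local and independent of the Corollary. In fact, the total symmetry of $C$ does not use $N=0$, so you can feed your identity back into \eqref{ricci-main}. This gives, for $U,V\in\cV$, $\Ric(U,V)=g(\bar\n_{JU}N,JV)-g(T_UV,N)$. Hence minimality alone forces Ricci-flatness pointwise, and the global Chern-class argument becomes unnecessary. The paper's route, in exchange, makes visible that $\tr C=0$ is exactly the vertical Ricci-flatness condition.
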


\begin{proof}
Since $S$ is symmetric, $C(U,V,W,Z)$ is automatically symmetric in
$V,W,Z$. It remains to prove symmetry in the first two entries.

For $U,V,W,Z\in\cV$, we take $X=JU, Y=JZ$ in formula \eqref{mixed-curvature} and we obtain 
\[
g(R(JU,V)W,JZ)
=
g((\bar\n_{JU}T)_VW,JZ)
+
g(T_{T_V(JU)}W,JZ).
\]
The first term on the right hand side is $C(U,V,W,Z)$. Moreover, we recall that $T_V(JU)=-B_VU=-B_UV$, 
where in the last equality we have used the total symmetry of $S$.
Hence
\[
g(T_{T_V(JU)}W,JZ) = -g(JT_WZ,JT_VU) 
=
-S(B_UV,W,Z).
\]
Therefore
\beq\label{Ccurvature}
C(U,V,W,Z)
=
g(R(JU,V)W,JZ)
+
S(B_UV,W,Z).
\eeq

Interchanging $U$ and $V$, the second term on the right hand side is
unchanged. On the other hand, the K\"ahler curvature identities give
\[
R(JV,U)
=
-R(V,JU)
=
R(JU,V).
\]
Thus
\[
C(U,V,W,Z)=C(V,U,W,Z),
\]
and therefore $C$ is totally symmetric.\par 
As $N=0$ and $\Ric=0$, the formula \eqref{ricci-main} for the Ricci tensor reduces to  
\[
\Ric(U,V)
=
\sum_i(\bar\n_{JV_i}S)(U,V,V_i) =0.
\]
Therefore
\[
0
=
\sum_iC(V_i,U,V,V_i)
\]
and, since $C$ is totally symmetric, this is precisely our claim $\tr C=0$.\end{proof}

\par\medskip

We now apply the horizontal Ros' formula, that becomes meaningful only when the degree of the tensor $Q$ is odd. Let $Q$ be the symmetric
covariant tensor of degree $7$ obtained by symmetrizing $S\otimes C$. For semplicity we may suppose that for every unit tangent vector $v\in \cV$ 
\[
Q(v,\ldots,v)
=
S(v,v,v)C(v,v,v,v).
\]

Define the covariant tensor $D$ of degree $5$ on $\cV$ by
\beq\label{definitionD}
D(E,L,U,V,W)
:=
(\bar\n_{JE}C)(L,U,V,W).
\eeq
Then
\[
(\bar\n_{Jv}Q)(v,\ldots,v)
=
C(v,v,v,v)^2
+
S(v,v,v)D(v,v,v,v,v).
\]
If $v\in U\cH$ and we put $u=Jv\in U\cV$, then
$v=-Ju$. Hence Proposition~\ref{horizontalRos}, together with the
preceding identity and multiplication by $-1$, gives
\beq\label{horizontalRosSC}
0=
\int_{U\cH}
\left[
C(Jv,Jv,Jv,Jv)^2
+
S(Jv,Jv,Jv)D(Jv,Jv,Jv,Jv,Jv)
\right]d\nu.
\eeq

\par\medskip

We next perform the integration on the fibers. We use the standard moment formula for the uniform measure on the
sphere (see \cite[Sec.~3]{VignatBhatnagar} and also \cite{Folland}, \cite{Efthimiou}):
\beq\label{sphereint}
\int_{S^{n-1}}x_{i_1}\cdots x_{i_{2m}}\,d\sigma
=
\frac{\vol(S^{n-1})}
{n(n+2)\cdots(n+2m-2)}
\sum_{\mathcal P}
\prod_{\{a,b\}\in\mathcal P}\delta_{i_ai_b},
\eeq
where $\mathcal P$ is the set of all $(2m-1)!! = 1 \cdot 3 \cdot 5 \cdots (2m-1)$ distinct ways to pair the $2m$ indices 
$\{1,\ldots,2m\}$.
In our case $m=4$ and we put $c_n:= \frac{\vol(S^{n-1})}{n(n+2)(n+4)(n+6)}$.
Since $C$ is symmetric and trace-free, the only nonzero pairings in
the integral of $C(v,v,v,v)^2$ are those where every index of the
first copy of $C$ is paired with an index of the second copy of $C$. There are $4!$
such pairings, and therefore
\beq\label{integralC}
\int_{U\cH_p}C(Jv,Jv,Jv,Jv)^2\,d\sigma_p(v)
=
24c_n|C|^2.
\eeq

We now consider the mixed term. Fix an orthonormal basis
$\{u_i\}$ of $\cH_p$ and write
\[
S_{abc}:=S(Ju_a,Ju_b,Ju_c),
\qquad
D_{defgh}:=D(Ju_d,Ju_e,Ju_f,Ju_g,Ju_h).
\]
The tensor $D$ is symmetric in its last four indices. Moreover, since
$\tr C=0$ and $\bar\n$ is metric, $D$ is trace-free in any two of its
last four indices.

Since $S$ is trace-free, every nonzero pairing in
\[
S_{abc}D_{defgh}
\int_{S^{n-1}}
x_ax_bx_cx_dx_ex_fx_gx_h\,d\sigma
\]
must pair each of the three indices $a,b,c$ with an index of $D$.

If the first index $d$ of $D$ is paired with one of $a,b,c$, then two
among the last four indices of $D$ must be paired with each other,
and that contribution vanishes because $D$ is trace-free in its last
four indices.

Thus, in every surviving pairing, $d$ is paired with one among
$e,f,g,h$, and the three indices $a,b,c$ are paired with the remaining
three indices of $D$. There are
\[
4\cdot3!=24
\]
such pairings. Therefore, using the fact that $D$ is also symmetric in the last four entries, we have 
\beq\label{mixedfiberintegral}
\int_{U\cH_p}
S(Jv,Jv,Jv)D(Jv,Jv,Jv,Jv,Jv)\,d\sigma_p(v)
=
24c_n\langle S,H\rangle,
\eeq
where the symmetric cubic tensor $H$ on $\cV$ is defined by
\beq\label{definitionH}
H(U,V,W)
:=
\sum_{i=1}^nD(V_i,V_i,U,V,W).
\eeq

Combining \eqref{horizontalRosSC}, \eqref{integralC} and
\eqref{mixedfiberintegral}, we obtain
\beq\label{Rosreduced}
0=
24c_n
\int_M
\left(
|C|^2+\langle S,H\rangle
\right)d\mu_M.
\eeq

\par\medskip

We now compute $H$. 
We recall that 
\[
\sum_iD(U,V_i,V_i,V,W)
=
\sum_i(\bar\n_{JU}C)(V_i,V_i,V,W)
=
0
\]
because $\tr C=0$. Hence
\beq\label{Hantisym}
H(U,V,W)
=
\sum_i
\left[
D(V_i,U,V_i,V,W)
-
D(U,V_i,V_i,V,W)
\right].
\eeq

Therefore we need to compute the antisymmetric part of $D$ in its first two
entries (this point requires some care because $\bar\n$ has torsion).

For $E,L,U,V,W\in\cV$, using $\bar\n J=0$, one has
\[
D(E,L,U,V,W)
=
\left(
\bar\n_{JE}\bar\n_{JL}S
-
\bar\n_{\bar\n_{JE}(JL)}S
\right)(U,V,W).
\]
Therefore
\begin{align*}
&D(E,L,U,V,W)-D(L,E,U,V,W)\\
&=
\left(
\bar R(JE,JL)\cdot S
-
\bar\n_{\tor^{\bar\n}(JE,JL)}S
\right)(U,V,W).
\end{align*}
Now $JE,JL$ are horizontal. Since $A=0$, we have
$\bar\n_{JE}JL=\n_{JE}JL$ and
$\bar\n_{JL}JE=\n_{JL}JE$. Hence, since $\n$ is torsion-free,
\[
\tor^{\bar\n}(JE,JL)
=
\bar\n_{JE}JL-\bar\n_{JL}JE-[JE,JL]
=
\n_{JE}JL-\n_{JL}JE-[JE,JL]
=
0.
\]
It follows that
\begin{align*}
&D(E,L,U,V,W)-D(L,E,U,V,W)\\
&=
-S(\bar R(JE,JL)U,V,W)
-S(U,\bar R(JE,JL)V,W)\\
&\qquad
-S(U,V,\bar R(JE,JL)W).
\end{align*}
We note that the horizontal foliation is totally geodesic and therefore for horizontal directions, $\bar R$ coincides with $R$ on $\cV$.
Using the K\"ahler curvature identity and \eqref{f4},
\[
\bar R(JE,JL)|_{\cV}
=
R(JE,JL)|_{\cV}
=
R(E,L)|_{\cV}
=
-[B_E,B_L].
\]
It then follows that 
\beq\label{Dcommutator}
\begin{aligned}
&D(E,L,U,V,W)-D(L,E,U,V,W)\\
&=
S([B_E,B_L]U,V,W)
+
S(U,[B_E,B_L]V,W)
+
S(U,V,[B_E,B_L]W).
\end{aligned}
\eeq

Substituting \eqref{Dcommutator} into \eqref{Hantisym}, and writing
$B_i:=B_{V_i}$, we get
\beq\label{Hcommutator}
\begin{aligned}
H(U,V,W)
&=
\sum_{i=1}^n\{
S([B_i,B_U]V_i,V,W)\\
&+
S(V_i,[B_i,B_U]V,W) +
S(V_i,V,[B_i,B_U]W)\}.
\end{aligned}
\eeq

\par\medskip

We now write
\[
\langle S,H\rangle=I_1+I_2+I_3
\]
according to the three terms in \eqref{Hcommutator}.\par\medskip

\noindent $\bullet$\ As for $I_1$, we first recall that by minimality $\sum_{i=1}^nB_iV_i=0$, so that 
$$\sum_{i=1}^n[B_i,B_a]V_i=\sum_{i=1}^nB_iB_aV_i-B_a\sum_{i=1}^nB_iV_i=\sum_{i=1}^nB_i^2V_a,$$
where we have used the fact that $B_aV_i=B_iV_a$. If we put
\beq\label{definitionP}
P:=\sum_iB_i^2.
\eeq
then 
\begin{align*}
I_1
&=
\sum_{a,b,c,i}
S_{abc}
S([B_i,B_a]V_i,V_b,V_c) = \sum_{a,b,c}S_{abc}
S(PV_a,V_b,V_c)\\
&= \sum_{a,b,c}g(B_bV_a,V_c)g(B_bV_c,PV_a) = \sum_{a,b,c} g(B_bV_a,V_c)g(V_c,B_bPV_a)\\
&= \sum_{a,b}g(B_bV_a,B_bPV_a) = \sum_{a,b}g(V_a,B_b^2PV_a) = |P|^2.
\end{align*}
\noindent $\bullet$\ As for $I_2$, put
\[
A_{ia}:=[B_i,B_a].
\]
We have 
$$I_2=
\sum_{i,a,b,c}S_{abc}S(V_i,A_{ia}V_b,V_c)=\sum_{i,a}\tr(B_aB_iA_{ia}).$$

Now note that if $K,L$ are two symmetric endomorphisms and $M=[K,L]$, then taking the transpose in the trace we have
$$T:= \Tr(LKM) = -\Tr(MKL) = -\Tr(KLM).$$
This implies that 
$$\Tr((LK-KL)M)=2T = -\Tr(M^2)$$
and therefore 
$$I_2 = \frac 12 \sum_{i,a}||[B_i,B_a]||^2.$$
\noindent $\bullet$ For the last term, we have 
$$
I_3=
\sum_{i,a,b,c}
S_{abc}
S(V_i,V_b,A_{ia}V_c)=
-\sum_{i,a}\tr(B_aA_{ia}B_i).$$
Using again cyclicity of the trace and the same elementary matrix
identity, we obtain
\beq\label{I3}
I_3
=
\frac12\sum_{i,a}|[B_i,B_a]|^2.
\eeq

Therefore we obtain 
\beq\label{SHpositive}
\langle S,H\rangle
=
\left|\sum_iB_i^2\right|^2
+
\sum_{i,a}|[B_i,B_a]|^2.
\eeq

Substituting \eqref{SHpositive} into \eqref{Rosreduced}, we finally
obtain
\beq\label{finalpositiveidentity}
0=
24c_n
\int_M
\left[
|C|^2
+
\left|\sum_iB_i^2\right|^2
+
\sum_{i,a}|[B_i,B_a]|^2
\right]d\mu_M.
\eeq
As all three terms are nonnegative, we obtain 
\[
C=0,
\qquad
\sum_iB_i^2=0,
\qquad
[B_i,B_a]=0.
\]
Since every $B_i$ is symmetric, $\tr\left(\sum_iB_i^2\right)
=\sum_i|B_i|^2  =0$, which forces $B_i=0$ for every $i=1,\ldots, n$, hence $T\equiv 0$. Together with $A\equiv 0$, this implies that the distributions $\cV$ and $\cH$ are parallel, hence the mixed curvature vanishes. Since the vertical leaves are flat, the K\"ahler identities imply that $R\equiv 0$ and our claim is proved.

\end{document}